\theoremstyle{plain}
\newtheorem{theorem}{Theorem}
\newtheorem{corollary}{Corollary}
\newtheorem*{theorem*}{Theorem}
\newtheorem*{corollary*}{Corollary}
\theoremstyle{definition}
\newtheorem{remark}{Remark}
\begin{document}

\title{On $r$-Equichromatic Lines with few points in $\mathbb{C}^2$}

\author{Dickson Y. B. Annor}


\subjclass[2020]{52C35} 


\keywords{Bichromatic line, monochromatic line,  $r$-equichromatic line.}

\thanks{The author was supported by a La Trobe Graduate Research Scholarship.}

\begin{abstract}
  Let $P$ be a set of $n$ green and $n - k$ red points in $\mathbb{C}^2$.  A line determined by $i$ green and $j$ red points such that $i + j \ge 2$ and $|i - j| \le r$ is called \emph{r-equichromatic}. We establish lower bounds for $1$-equichromatic  and $2$-equichromatic lines. 
  In particular, we show that if at most $2n-k-2$ points of $P$ are collinear, then the  number of $1$-equichromatic lines passing through at most six points is at least $\frac{1}{4}(6n-k(k+3))$, and if  at most $\frac{2}{3}(2n - k)$ points of $P$
are collinear, then the number of $2$-equichromatic lines passing through at most four points is at least $\frac{1}{6}(10n - k(k + 5))$.

\end{abstract}

\maketitle

\section{Introduction}\label{sec:intr}
In this paper we study sets of $n$ green points and $n - k$ red points in the complex plane. Let $P$ be such a set. A line containing two or more points of $P$ is said to be \emph{determined} by $P$. A line determined by at least one green and one red point is called \emph{bichromatic}. Otherwise, it is called \emph{monochromatic}. A line determined by $i$ green and $j$ red points such that $i + j \ge 2$ and $|i - j| \le r$ is called \emph{r-equichromatic}. Note that every $1$-equichromatic line is a bichromatic line.

In \cite{8}, Purdy and Smith  studied lower bounds on the number of bichromatic lines and on the number of $1$-equichromatic lines in $\mathbb{C}^2$ and  $\mathbb{R}^2$. For brevity, we will mention only the results on $1$-equichromatic lines and we refer interested readers to \cite{7, 8} for some other results.

\begin{theorem}[Purdy and Smith \cite{8}]\label{thm: PS1}
 Let $P$ be a set of $n$ green and  $n - k$ red points
in $\mathbb{R}^2$ such that the points of $P$ are not all collinear. Let $t$ be the total number of lines determined by $P$. Then the number of $1$-equichromatic lines is
at least $\frac{ 1 }{4}(t + 2n + 3 - k(k +1))$. 
\end{theorem}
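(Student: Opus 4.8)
The plan is to translate the equichromatic condition into the single quadratic quantity $(a_\ell-b_\ell)^2$, where for each connecting line $\ell$ I write $a_\ell$ for the number of green and $b_\ell$ for the number of red points on $\ell$. A line is $1$-equichromatic precisely when $a_\ell+b_\ell\ge 2$ and $(a_\ell-b_\ell)^2\le 1$; every monochromatic line and every bichromatic line that fails to be $1$-equichromatic instead satisfies $(a_\ell-b_\ell)^2\ge 4$. Hence, writing $E$ for the number of $1$-equichromatic lines and $t$ for the total number of connecting lines, the $t-E$ remaining lines each contribute at least $4$, so that $\sum_{\ell}(a_\ell-b_\ell)^2\ge 4(t-E)$. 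This reduces the theorem to producing a good upper bound for $\sum_\ell (a_\ell-b_\ell)^2$.

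First I would evaluate this sum exactly by double counting pairs of points. Using $(a_\ell-b_\ell)^2=(a_\ell+b_\ell)^2-4a_\ell b_\ell$ together with the incidence identities $\sum_\ell\binom{a_\ell+b_\ell}{2}=\binom{2n-k}{2}$, $\sum_\ell a_\ell b_\ell=n(n-k)$ (each green--red pair lies on a unique line), and $\sum_\ell(a_\ell+b_\ell)=\sum_\ell|\ell\cap P|=:D$, a short computation gives the clean identity $\sum_\ell (a_\ell-b_\ell)^2=D+k(k+1)-2n$. The appearance of the exact term $k(k+1)-2n$ here is what will match the $-k(k+1)+2n$ in the statement, so this step is essentially forced.

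The remaining ingredient, and the step I expect to be the crux, is to bound $D=\sum_\ell|\ell\cap P|$ from above by $3t-3$. This is exactly the point--line dual of Melchior's inequality $t_2\ge 3+\sum_{j\ge 4}(j-3)\,t_j$, valid for any point set in $\R^2$ that is not all collinear, where $t_j$ denotes the number of connecting lines through exactly $j$ points: since $\sum_j t_j=t$ and $\sum_j j\,t_j=D$, this rearranges to $3t-D=\sum_j(3-j)\,t_j\ge 3$. This is the one place where working over $\R^2$ rather than $\C^2$ is essential, since Melchior's inequality relies on the Euler characteristic of the real projective plane. Combining the three steps, $4(t-E)\le\sum_\ell(a_\ell-b_\ell)^2=D+k(k+1)-2n\le 3t-3+k(k+1)-2n$, and solving for $E$ yields $E\ge\frac14\big(t+2n+3-k(k+1)\big)$, as required. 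The main obstacles are thus in assembling known facts correctly: verifying the pairing identity and invoking the correct (dual, non-collinear) form of Melchior's inequality; the quadratic reformulation and the final arithmetic are routine.
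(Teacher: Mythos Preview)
Your proof is correct, and it follows precisely the route the paper attributes to Purdy and Smith: the paper does not reprove Theorem~\ref{thm: PS1} but records (Remark~2) that it was obtained via Melchior's inequality, and it derives exactly your key identity as equation~(\ref{eqn:id3}), namely $\sum_{\ell}(a_\ell-b_\ell)^2=\sum_\ell(a_\ell+b_\ell)+k(k+1)-2n$. Your argument is the same computation in pointwise $(a_\ell,b_\ell)$ notation rather than the paper's aggregated $t_{i,j}$ notation, with Melchior's inequality supplying $D\le 3t-3$ and the observation that non-$1$-equichromatic lines contribute at least $4$ to $\sum_\ell(a_\ell-b_\ell)^2$ completing the bound.
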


\begin{theorem}[Purdy and Smith \cite{8}]\label{thm: PS2}
 Let $P$ be a set of $n$ green and  $n - k$ red points
in $\mathbb{R}^2$ such that the points of $P$ are not all collinear. Then the number of $1$-equichromatic lines determined by at most four points is
at least $\frac{ 1 }{4}(2n + 6 - k(k +1))$. 
\end{theorem}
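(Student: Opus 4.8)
The plan is to deduce the statement from Theorem~\ref{thm: PS1} by isolating the role of the \emph{large} $1$-equichromatic lines. Write $B$ for the total number of $1$-equichromatic lines, $B_{\le 4}$ for the number of these meeting at most four points, and $b_{\ge 5}=B-B_{\le 4}$ for the number meeting at least five points. The first observation is that the target quantity $\tfrac14(2n+6-k(k+1))$ is \emph{exactly} the bound of Theorem~\ref{thm: PS1} with the total line-count $t$ replaced by its least possible value $3$ (three non-collinear points already determine three lines). Subtracting $b_{\ge 5}$ from the conclusion of Theorem~\ref{thm: PS1} gives $B_{\le 4}\ge \tfrac14\bigl(t+2n+3-k(k+1)\bigr)-b_{\ge 5}$, and a direct comparison shows that the assertion follows as soon as one proves $b_{\ge 5}\le \tfrac14(t-3)$; that is, each $1$-equichromatic line on five or more points must be paid for by four lines of $P$. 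All of the difficulty is thereby concentrated in the lines carrying many points.

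To produce these lines I would use Euler's formula. Dualising the $2n-k$ points of $P$ to a family of lines in the real projective plane $\mathbb{RP}^2$ --- which are not all concurrent precisely because $P$ is not collinear --- and applying Euler's relation to the resulting arrangement yields Melchior's inequality $t_2\ge 3+\sum_{j\ge 4}(j-3)t_j$, where $t_j$ denotes the number of lines of $P$ incident to exactly $j$ points. Writing $t=\sum_{j\ge 2}t_j$ and substituting, this rearranges to $t\ge 3+t_3+2t_4+\sum_{j\ge 5}(j-2)t_j\ge 3+3t_5+4t_6+5t_7+\cdots$. In particular every $1$-equichromatic line on at least six points already contributes its full quota of (at least) four to $t-3$, so only the five-point case can obstruct the inequality $b_{\ge 5}\le\tfrac14(t-3)$.

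The main obstacle is exactly this five-point case: a $1$-equichromatic line on five points is a $(2,3)$- or $(3,2)$-line, for which Melchior's inequality supplies the coefficient $3$ rather than the needed $4$. One extra line must therefore be manufactured, and it can only come from the colouring, since the uncoloured count is genuinely short here. I would attempt to recover it by refining the Euler-formula computation into a two-coloured discharging argument: assign to each vertex of the dual arrangement a charge reflecting the green/red pattern of the lines through it, and redistribute charge from the monochromatic cells and edges forced around a balanced, high-multiplicity vertex so that each such vertex receives one additional unit. Establishing that this redistribution is always possible --- equivalently, that each $(2,3)$- or $(3,2)$-line forces one further line of $P$ not already counted --- is the crux of the proof.

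Finally, two remarks frame the argument. When $P$ has no line on five or more points we have $b_{\ge 5}=0$, and the conclusion is immediate from Theorem~\ref{thm: PS1} together with $t\ge 3$. Moreover, the bound of Theorem~\ref{thm: PS2} is a multiple of $\tfrac14$ while $B_{\le 4}$ is an integer, so when $b_{\ge 5}$ is small the gap between the coefficients $3$ and $4$ is absorbed by rounding; this is visible in the extremal configuration consisting of two green and three red collinear points together with one further green point off the line, where $B_{\le 4}=3$ meets the bound with equality. For larger $b_{\ge 5}$ the colour-refined estimate of the previous paragraph is what carries the proof.
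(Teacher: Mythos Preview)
Your reduction to the inequality $b_{\ge 5}\le\tfrac14(t-3)$ is algebraically correct, but this inequality is never established: you yourself observe that Melchior only supplies the coefficient $3$ on $t_5$, and the ``colour-refined discharging'' you invoke to recover the missing unit is not carried out. It is not clear that such an argument can be made to work; a configuration in which every $5$-point line is of type $(2,3)$ and there are few ordinary lines to spare would have to be ruled out, and nothing in your outline does so. As it stands, the proposal has a genuine gap at exactly the point you identify as the crux.

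The actual argument (indicated in Remark~\ref{thm: PS2}-adjacent of the paper and carried out in \cite{8}) bypasses Theorem~\ref{thm: PS1} entirely and runs Melchior's inequality directly against the coloured identity~(\ref{eqn:id2})$-$(\ref{eqn:id1}), exactly as the present paper does with Hirzebruch's inequality in the proof of Theorem~\ref{myown3}. Adding $\sum_{i+j\ge2}\bigl((i+j)-3\bigr)t_{i,j}\le -3$ to $\sum_{i+j\ge2}\bigl[\binom{i}{2}+\binom{j}{2}-ij\bigr]t_{i,j}=-n+\tfrac{k^2+k}{2}$ produces coefficients $\tfrac12\bigl((i-j)^2+(i+j)-6\bigr)$ on $t_{i,j}$. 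One checks that the only negative coefficients are $\alpha_{1,1}=-2$ and $\alpha_{1,2}=\alpha_{2,1}=\alpha_{2,2}=-1$, while $\alpha_{2,3}=\alpha_{3,2}=\alpha_{3,3}=0$; dropping the nonnegative terms gives $-2(t_{1,1}+t_{1,2}+t_{2,1}+t_{2,2})\le -n-3+\tfrac{k^2+k}{2}$, which is the claim. The point is that in this direct combination the $5$- and $6$-point $1$-equichromatic lines carry coefficient \emph{zero}, so the deficit you encountered never arises.
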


\begin{theorem}[Purdy and Smith \cite{8}]\label{thm: PS3}
 Let $P$ be a set of $n$ green and  $n - k$ red points
in $\mathbb{C}^2$ such that no $2n - k - 2$ points of $P$ are collinear. Then the number of $1$-equichromatic lines determined by at most five points is
at least $\frac{ 1 }{4}(6n - k(k + 3))$. 
\end{theorem}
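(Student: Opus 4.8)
The plan is to work projectively in $\mathbb{CP}^2$, writing $N = 2n - k$ for the total number of points, and for each determined line $\ell$ recording the numbers $g_\ell$ and $r_\ell$ of green and red points on it, together with $m_\ell = g_\ell + r_\ell$. Let $t_j$ denote the number of determined lines with exactly $j$ points. The hypothesis that no $2n-k-2 = N-2$ points are collinear says precisely that $t_j = 0$ for every $j \ge N-2$; in particular $t_N = t_{N-1} = 0$. The first observation is purely combinatorial: a bichromatic line with at most three points is automatically $1$-equichromatic (its type is $(1,1)$, $(1,2)$ or $(2,1)$, all with $|g_\ell - r_\ell| \le 1$), while among lines with four or five points exactly the types $(2,2)$ and $(2,3),(3,2)$ are $1$-equichromatic. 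Thus if $E$ denotes the set of $1$-equichromatic lines with at most five points, then $E$ already contains every bichromatic line with at most three points, and the target is $|E| \ge \tfrac14(6n - k(k+3))$.

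The engine of the proof is Hirzebruch's inequality for complex line arrangements: for a set of $N$ points in $\mathbb{CP}^2$ with $t_N = t_{N-1} = 0$ one has
\[
t_2 + \tfrac34 t_3 \ge N + \sum_{j \ge 5}(2j - 9)\,t_j .
\]
This is the complex counterpart of Melchior's inequality underlying the real Theorems~\ref{thm: PS1} and \ref{thm: PS2}, and the fact that its correction term begins at $j = 5$ is exactly what makes ``at most five points'' the natural threshold in the complex setting, just as Melchior's threshold at $j = 4$ produces ``at most four points'' in Theorem~\ref{thm: PS2}. Since our hypothesis gives $t_j = 0$ for $j \ge N-2$ the inequality applies, and (discarding the nonnegative tail) guarantees a supply of roughly $N$ lines with two or three points. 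Alongside this I would record the two colour-counting identities obtained by counting point pairs line by line,
\[
\sum_\ell \binom{m_\ell}{2} = \binom{N}{2}, \qquad \sum_\ell g_\ell\, r_\ell = n(n-k),
\]
the second of which is the crucial one, since monochromatic lines contribute $0$ to it and each $1$-equichromatic line with at most five points contributes at most $6$.

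The heart of the argument is to convert the colourblind count of Hirzebruch into a count of bichromatic, and then $1$-equichromatic, lines. The plan is a charging/weighting scheme: assign to each line a weight reflecting both its size (as in Hirzebruch) and its colour type, so that $1$-equichromatic lines absorb the entire lower bound while monochromatic and badly unbalanced lines are paid for by the $\sum_{j\ge5}$ tail together with the deficit recorded in the identity
\[
\sum_\ell (g_\ell - r_\ell)^2 = \sum_\ell m_\ell + k(k+1) - 2n ,
\]
which follows by subtracting the two displayed identities. The imbalance correction $k(k+3) = k^2 + 3k$ is meant to emerge from the $k(k+1)$ term here (the same term that appears in Theorem~\ref{thm: PS1}), which carries the effect of having $k$ more green than red points. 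I would organise the bookkeeping either as a single weighted inequality or, more robustly, as an induction on $k$ whose base case $k=0$ is the balanced configuration, where Hirzebruch and $\sum_\ell g_\ell r_\ell = n^2$ should yield $|E| \ge \tfrac{3n}{2}$ directly; the per-step loss must then be matched against the finite difference $f(k)-f(k+1) = \tfrac{k+2}{2}$ of the target $f(k)=\tfrac14(6n-k(k+3))$.

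The step I expect to be the main obstacle is precisely this interface between Hirzebruch and the colouring. Hirzebruch guarantees many two- and three-point lines, but some may be monochromatic and hence useless, and there is no a priori linear bound on monochromatic few-point lines (the $n$ green points in general position alone produce $\binom{n}{2}$ monochromatic two-point lines). The real content must therefore be a colour-sensitive lower bound on bichromatic few-point lines --- a quantitative Motzkin--Rabin / coloured Sylvester--Gallai statement in $\mathbb{C}^2$ --- rather than the naive strategy of subtracting monochromatic lines from the total. Getting this colour-sensitive version to yield exactly the coefficient $\tfrac14$, to interact correctly with the imbalance term, and (in any inductive realisation) to admit a point whose removal both destroys few $1$-equichromatic lines and preserves the non-degeneracy hypothesis, is where I would concentrate the effort; the extremal behaviour of two-coloured Hirzebruch-tight configurations such as the Hesse configuration should guide the choice of weights.
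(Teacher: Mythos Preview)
You have assembled exactly the right ingredients --- Hirzebruch's inequality $t_2+\tfrac34 t_3\ge N+\sum_{j\ge5}(2j-9)t_j$ and the colour identity obtained by subtracting~(\ref{eqn:id1}) from~(\ref{eqn:id2}) --- but you then talk yourself out of the one-line finish and into a much harder problem that is not actually there.

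The point you are missing is that no separate ``colour-sensitive Motzkin--Rabin'' input and no induction on $k$ is needed: one simply \emph{adds} the two relations and reads off the signs. Write Hirzebruch in the form
\[
-\,t_2-\tfrac34 t_3+\sum_{i+j\ge5}\bigl(2(i+j)-9\bigr)t_{i,j}\ \le\ -(2n-k),
\]
and the identity (\ref{eqn:id2})$-$(\ref{eqn:id1}) in the form
\[
\sum_{i+j\ge2}\frac{(i-j)^2-(i+j)}{2}\,t_{i,j}\ =\ -n+\frac{k^2+k}{2}.
\]
Adding these, the coefficient of $t_{i,j}$ on the left is $\alpha_{i,j}=\tfrac12\bigl((i-j)^2-(i+j)\bigr)+c_{i+j}$, where $c_2=-1$, $c_3=-\tfrac34$, $c_4=0$, and $c_m=2m-9$ for $m\ge5$. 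A direct check (or the inequality $3(i+j)+(i-j)^2\ge18$ for $i+j\ge6$) shows that the \emph{only} negative coefficients are
\[
\alpha_{1,1}=-2,\quad \alpha_{1,2}=\alpha_{2,1}=-\tfrac74,\quad \alpha_{2,2}=-2,\quad \alpha_{2,3}=\alpha_{3,2}=-1,
\]
i.e.\ precisely the $1$-equichromatic types on at most five points. Since every other $\alpha_{i,j}\ge0$ --- in particular every monochromatic type $(m,0),(0,m)$ has $\alpha\ge0$ automatically, because $\tfrac12(m^2-m)\ge 0$ already dominates --- dropping those terms gives
\[
-2\bigl(t_{1,1}+t_{1,2}+t_{2,1}+t_{2,2}+t_{2,3}+t_{3,2}\bigr)\ \le\ \frac{-6n+k(k+3)}{2},
\]
which is the claim. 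This is exactly the method the paper uses (see the proof of Theorem~\ref{myown3}, which is the same computation with the weaker Hirzebruch inequality~(\ref{eqn:hir1}) in place of~(\ref{eqn:hir}), yielding the six-point version). Your concern that ``some two- and three-point lines may be monochromatic and hence useless'' is precisely what the $(i-j)^2$ term in the colour identity handles for free; there is no interface problem to solve.
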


\begin{theorem}[Purdy and Smith \cite{8}]\label{thm: PS4}
 Let $P$ be a set of $n$ green and  $n - k$ red points
in $\mathbb{R}^2$ such that the points of $P$ are not all collinear. Let $t$ be the total number of lines determined by $P$. Then the number of $1$-equichromatic lines determined by at most six points is
at least $\frac{ 1 }{12}(t + 6n + 15 - 3k(k +1))$. 
\end{theorem}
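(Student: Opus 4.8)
The plan is to recast the statement as a single weighted line count and to reproduce the argument behind Theorem \ref{thm: PS1}, but truncated so that no line is credited beyond its sixth point. For a line $\ell$ determined by $P$, write $a_\ell$ and $b_\ell$ for its numbers of green and red points and set $m_\ell=a_\ell+b_\ell$; thus $\ell$ is $1$-equichromatic exactly when $a_\ell,b_\ell\ge1$ and $|a_\ell-b_\ell|\le1$. Let $e_{\le j}$ denote the number of $1$-equichromatic lines with $m_\ell\le j$ and let $t$ be the total number of lines. I would then phrase the goal as the inequality $12\,e_{\le6}-t\ge 6n+15-3k(k+1)$, that is, $\sum_\ell w(\ell)\ge 6n+15-3k(k+1)$ where $w(\ell)=12$ on $1$-equichromatic lines with $m_\ell\le 6$ and $w(\ell)=-1$ on every other line.

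The guiding observation is the arithmetic identity $3B=B_1+2B_2$, where $B=\frac{1}{12}(t+6n+15-3k(k+1))$ is the target bound and $B_1,B_2$ are the bounds of Theorems \ref{thm: PS1} and \ref{thm: PS2}. This says the target constant is precisely what one obtains by adding the Theorem \ref{thm: PS1} count once to twice the Theorem \ref{thm: PS2} count, so the first thing to try is to combine them. The catch is that Theorem \ref{thm: PS1} counts \emph{all} $1$-equichromatic lines, including those on seven or more points, whereas here such lines must be discarded: the inequality $3\,e_{\le6}\ge e+2\,e_{\le4}$ that the naive combination would require is equivalent to $2(e_{\le6}-e_{\le4})\ge e-e_{\le6}$ and can fail outright when $P$ admits many balanced lines on at least seven points. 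Hence the bare combination is not a proof, and the factor-$3$ relation should instead be read as telling me which constant a truncated version of the Theorem \ref{thm: PS1} argument must hit.

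So the approach I would take is to run the counting scheme behind Theorem \ref{thm: PS1} --- presumably a sum over lines of a local weight in $(a_\ell,b_\ell)$, controlled by the two planar identities $\sum_\ell\binom{m_\ell}{2}=\binom{2n-k}{2}$ and the Melchior--Euler inequality $\sum_\ell(3-m_\ell)\ge 3$ (valid since $P$ is not all collinear, and exactly the ingredient that is absent over $\mathbb{C}$, forcing the stronger collinearity hypothesis of Theorem \ref{thm: PS3}) --- while capping each line's positive credit at six points. Short lines, $m_\ell\le 4$, can then be handled by Theorem \ref{thm: PS2}, which accounts for the doubled term, and lines on five or six points are pure gain. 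The hard part will be the adverse contribution of the long lines: every line pays $-1$ through the $-t$ in $w$, and a $1$-equichromatic line on $m_\ell\ge7$ points earns nothing to offset it. I would control these using that such a line is expensive in both identities --- it contributes at least $\binom{7}{2}=21$ to the pair budget and at most $3-7=-4$ to the Melchior sum --- so that the surplus those identities force onto the short lines pays for the long-line penalties. Matching the constants so that truncation at \emph{six} points (rather than four, as in Theorem \ref{thm: PS2}) is exactly what these identities can afford is where I expect the genuine difficulty, rather than in any single geometric input.
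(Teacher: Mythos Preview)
The paper does not actually prove Theorem~\ref{thm: PS4}; it is quoted from Purdy--Smith~\cite{8} and the only information offered about its proof is Remark~2, which records that Purdy and Smith obtained it from Melchior's inequality~(\ref{eqn:mel}). So there is no in-paper argument to compare your proposal against beyond that one sentence, together with the template the paper displays in its own proof of the $\mathbb{C}^2$ analogue, Theorem~\ref{myown3}.

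Judged against that template, your ingredients are right but your proposal is not yet a proof. In the proof of Theorem~\ref{myown3} the paper writes the relevant incidence inequality (there Hirzebruch, here Melchior) in the variables $t_{i,j}$, adds it to the difference (\ref{eqn:id2})$-$(\ref{eqn:id1}) of the pair-counting identities, and then simply checks which coefficients $\alpha_{i,j}$ come out with the favourable sign; those turn out to label exactly the $1$-equichromatic lines on at most six points, and the bound drops out. Remark~2 indicates that Purdy--Smith's proof of Theorem~\ref{thm: PS4} follows the same pattern with Melchior in place of Hirzebruch. Your sketch correctly singles out Melchior and the pair identities, and your observation $3B=B_1+2B_2$ is true, but the route you describe---invoke Theorem~\ref{thm: PS2} for short lines, then separately argue that long $1$-equichromatic lines are ``expensive'' enough in the two identities to pay their $-1$ penalty---is not how these arguments run, and you yourself flag that ``matching the constants'' is unresolved. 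There is no separate long-line analysis and no appeal to Theorem~\ref{thm: PS2}: one fixes multipliers for Melchior and for (\ref{eqn:id2})$-$(\ref{eqn:id1}) (and, since $t$ appears in the bound, for $\sum t_{i,j}=t$), computes every $\alpha_{i,j}$, and verifies the sign pattern directly. Until you carry out that computation you have a plausible plan but not an argument.
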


Purdy and Smith \cite{8} asked whether one can prove a tight lower bound on the number of $1$-equichromatic or bichromatic lines determined by at most four points in $\mathbb{C}^2$. This question motivated the current study. Unfortunately, the closest we have come is $2$-equichromatic lines. Table 2 in Purdy and Smith \cite{8} contains the summary of their results on $1$-equichromatic lower bounds. In that table there is a lower bound for the number of $1$-equichromatic lines determined by at most six points in $\mathbb{C}^2$, but there is no result in their paper justifying this claim. 
So, we prove a lower bound for the number of $1$-equichromatic lines determined by at most six points in $\mathbb{C}^2$. Our lower bound is the same as the one claimed by  Purdy and Smith \cite{8} . 

\section{Incidence Inequalities}\label{sec:ineq}
The main ingredients used by Purdy and Smith \cite{8} and which also will be used in the present paper, are incidence inequalities. We list some well-known incidence inequalities.
 Let $t_k$ denote the number of lines that pass through exactly $k$ points.

\begin{theorem}[Melchior's Inequality \cite{4}]
 Let $S$ be a set of $n$ non-collinear
points in the plane. Then
\begin{equation}\label{eqn:mel}
\begin{split}
  \sum_{k \ge 2} (3 - k) t_{k} \ge 3. 
\end{split}
\end{equation}
\end{theorem}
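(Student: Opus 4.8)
The plan is to pass to the dual projective plane and apply Euler's formula. First I would dualize the configuration: each of the $n$ points of $S$ becomes a line, so we obtain an arrangement $\mathcal{A}$ of $n$ lines in the real projective plane. The hypothesis that the points of $S$ are not all collinear dualizes to the statement that the lines of $\mathcal{A}$ are not all concurrent, which guarantees that $\mathcal{A}$ genuinely subdivides the projective plane into a cell complex with vertices, edges, and faces to which the topological count applies.

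Next I would read off the combinatorial data of this complex in terms of the numbers $t_k$. A point lying on exactly $k$ of the dual lines is precisely the dual of a primal line through exactly $k$ points of $S$; hence the number of vertices of $\mathcal{A}$ at which exactly $k$ lines cross is $t_k$, giving $V = \sum_{k \ge 2} t_k$. Counting edge-endpoints at each vertex, a vertex incident to $k$ lines has degree $2k$, so $\sum_{k \ge 2} 2k\, t_k = 2E$ and therefore $E = \sum_{k \ge 2} k\, t_k$.

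I would then invoke Euler's formula for the real projective plane, whose Euler characteristic is $1$, namely $V - E + F = 1$. The key geometric input is that every face of a line arrangement whose lines are not all concurrent is bounded by at least three edges; since each edge borders exactly two faces, this yields $3F \le 2E$. Substituting $F = 1 - V + E$ gives $3(1 - V + E) \le 2E$, which rearranges to $3V - E \ge 3$, that is $\sum_{k \ge 2}(3 - k)\, t_k \ge 3$, as claimed.

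The step I expect to require the most care is the face inequality $3F \le 2E$: one must verify that, under the non-concurrency hypothesis, no face of the arrangement is a monogon or a bigon, so that each $2$-cell really has at least three sides. This is exactly where the non-collinearity assumption is spent, and it also ensures that $\mathcal{A}$ has enough lines in sufficiently general position to produce a genuine two-dimensional cell decomposition on which Euler's formula is valid.
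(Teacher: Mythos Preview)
Your argument is correct and follows exactly the approach the paper attributes to Melchior: the paper does not spell out a proof but simply notes that ``the proof for (\ref{eqn:mel}) uses Euler's polyhedral formula,'' and your dualize-then-apply-$V-E+F=1$-with-$3F\le 2E$ outline is precisely that classical derivation. There is nothing to add or correct.
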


The proof for  (\ref{eqn:mel}) uses Euler's polyhedral formula. In \cite{6}, Langer proved this inequality by working with pairs $(\mathbb{P}^{2}_{\mathbb{C}}, \alpha D)$ where $\mathbb{P}^{2}_{\mathbb{C}}$ is the complex projective plane with a $\mathbb{Q}$-effective (boundary) divisor $D$ such that $(\mathbb{P}^{2}_{\mathbb{C}}, \alpha D)$ is log canonical and effective.

\begin{theorem}[Langer's Inequality \cite{6}]
Let $S$ be a set of $n$ points in $\mathbb{P}^{2}_{\mathbb{C}}$, with at most $\frac{2}{3}n$ points collinear. Then
\begin{equation*}\label{eqn:langer}
\begin{split}
\sum_{k \ge 2} k t_{k} \ge \frac{n(n+3)}{3}.    
\end{split}
\end{equation*}
\end{theorem}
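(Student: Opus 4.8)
The plan is to follow the algebraic-geometric route indicated in the paragraph preceding the statement, since Langer's inequality is genuinely a characteristic-zero phenomenon (it fails over fields of positive characteristic) and therefore cannot be obtained from Euler's polyhedral formula the way Melchior's inequality was. I would first pass to the dual projective plane: the $n$ points of $S$ become an arrangement $\mathcal{A}$ of $n$ lines, a determined line carrying exactly $k$ points becomes a point lying on exactly $k$ lines of $\mathcal{A}$, so the numbers $t_k$ are preserved and the target quantity $\sum_{k\ge 2} k\,t_k$ is reinterpreted as a weighted count over the singular points of $\mathcal{A}$. Under this duality the hypothesis that at most $\tfrac{2}{3}n$ points of $S$ are collinear becomes the statement that at most $\tfrac{2}{3}n$ lines of $\mathcal{A}$ are concurrent, which is precisely the condition needed to keep the pair constructed below log canonical and to exclude the degenerate near-pencil case.

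Next I would form the log pair $(\mathbb{P}^{2}_{\mathbb{C}}, \alpha D)$, where $D=\sum_{i=1}^{n} L_i$ is the reduced divisor carried by the $n$ dual lines and $\alpha\in(0,1]$ is a parameter to be chosen. Taking the minimal log resolution $\pi\colon X\to\mathbb{P}^{2}_{\mathbb{C}}$ that blows up each point of multiplicity $\ge 3$ (nodes being already simple normal crossings), I would compute the two logarithmic Chern numbers $\bar c_1^{\,2}$ and $\bar c_2$ of the resolved pair. Both are explicit functions of $n$, $\alpha$, and the face data of the arrangement, reducing to combinations of $1$, $n$, $\sum_{k\ge 2} t_k$, $\sum_{k\ge 2} k\,t_k$, and $\sum_{k\ge 2} k^2 t_k$; the last of these I would eliminate using the universal identity $\sum_{k\ge 2}\binom{k}{2} t_k=\binom{n}{2}$, which holds because each pair of points lies on exactly one determined line.

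I would then invoke the logarithmic Bogomolov--Miyaoka--Yau inequality $\bar c_1^{\,2}\le 3\bar c_2$, valid because $\alpha$ is chosen to make the pair log canonical; this is Langer's theorem on the semistability of the log cotangent sheaf and is the substantive analytic input. Substituting the Chern-number formulas turns it into a relation that is affine in $\sum_{k\ge 2} k\,t_k$ with $\alpha$-dependent coefficients, and optimizing over the free parameter $\alpha$ (with the concurrency bound guaranteeing that the optimal $\alpha$ remains inside the log-canonical window) collapses it to the desired bound $\sum_{k\ge 2} k\,t_k \ge \tfrac{n(n+3)}{3}$. The main obstacle, beyond establishing the log-BMY inequality itself, is the bookkeeping of the log Chern numbers on $X$: one must correctly track the discrepancies introduced at each $k$-fold point and verify log canonicity right up to the $\tfrac{2}{3}n$ threshold, where the estimate becomes tight and any slip in the boundary analysis would spoil the sharp constant.
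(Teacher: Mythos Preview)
The paper does not prove this theorem at all: Langer's inequality is simply quoted from \cite{6} as a known input, with the one-line remark that Langer obtained it ``by working with pairs $(\mathbb{P}^{2}_{\mathbb{C}}, \alpha D)$ \ldots\ such that $(\mathbb{P}^{2}_{\mathbb{C}}, \alpha D)$ is log canonical and effective.'' There is therefore no in-paper argument to compare against.

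That said, your sketch is faithful to the method the paper alludes to. Dualising to a line arrangement, forming the log pair $(\mathbb{P}^2_{\mathbb{C}},\alpha D)$, resolving, computing the logarithmic Chern numbers in terms of $n$ and the $t_k$, invoking the log Bogomolov--Miyaoka--Yau inequality, and optimising over $\alpha$ is exactly Langer's route; the $\tfrac{2}{3}n$ concurrency hypothesis is precisely what keeps the optimal $\alpha$ in the log-canonical range. So your proposal is consistent with (indeed more detailed than) the paper's treatment, which amounts to a citation.
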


\begin{theorem}[Hirzebruch's Inequality \cite{2}]
Let $S$ be a set of $n$ points in $\mathbb{P}^{2}_{\mathbb{C}}$, with at most $n-2$ points collinear. Then
\begin{equation}\label{eqn:hir1}
\begin{split}
t_2 + t_3 \ge n +  \sum_{k \ge 5} (k - 4) t_{k}.    
\end{split}
\end{equation}
\end{theorem}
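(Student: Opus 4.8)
The plan is to recognise this as the dual form of Hirzebruch's line-arrangement inequality and to deduce it from the Bogomolov--Miyaoka--Yau (BMY) inequality. First I would pass to the dual projective plane: the $n$ points become $n$ lines $\mathcal{A} = \{\ell_1,\dots,\ell_n\}$, a line through exactly $k$ of the points becomes a point lying on exactly $k$ of the $\ell_i$, and the hypothesis that at most $n-2$ points are collinear becomes the condition that no $n-1$ of the lines are concurrent, i.e. $t_{n} = t_{n-1} = 0$ for the arrangement. In this language the target follows from the sharper bound
\begin{equation*}
t_2 + \tfrac{3}{4}\, t_3 \ \ge\ n + \sum_{k \ge 5}(k-4)\,t_k ,
\end{equation*}
since $t_3 \ge \tfrac34 t_3$ gives $t_2 + t_3 \ge t_2 + \tfrac34 t_3$. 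So it suffices to establish this sharper inequality for an arrangement of $n$ complex lines with no $n-1$ concurrent.

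The engine for the sharper bound is the BMY inequality $c_1^2 \le 3 c_2$ for surfaces of general type. Concretely, I would fix an integer $m \ge 2$ and form the Kummer (abelian) cover $Y \to \mathbb{P}^2_{\mathbb{C}}$ of exponent $m$ obtained by adjoining $m$-th roots of the defining linear forms of $\mathcal{A}$; this cover has degree $m^{n-1}$ and is branched exactly over the $n$ lines. Its singularities lie above the multiple points of $\mathcal{A}$, so I would first blow up in the base all points of multiplicity $\ge 3$, pull the cover back to the blow-up, and pass to the (now smooth) normalisation $\widetilde{Y}$.

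The technical heart is computing the Chern numbers $c_1^2(\widetilde Y)$ and $c_2(\widetilde Y)$ as explicit functions of $m$, $n$, and the $t_k$, using the standard transformation rules for $c_1^2$ and $c_2$ under blow-ups and under finite abelian covers branched along a normal-crossings divisor. Substituting these into $c_1^2 \le 3 c_2$ and simplifying then yields the sharper inequality; the hypotheses $t_n = t_{n-1} = 0$ are precisely what guarantee that $\widetilde Y$ is of general type, so that BMY applies, and that no degenerate pencil or near-pencil corrupts the Chern-number count.

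The main obstacle is exactly this Chern-number bookkeeping together with the verification that BMY is applicable. Unlike Melchior's inequality, which is elementary via Euler's polyhedral formula, Hirzebruch's inequality relies on genuinely complex-analytic input (BMY) and has no known purely combinatorial proof, so the deep ingredient cannot be avoided. An appealing alternative that sidesteps the explicit cover is Langer's orbifold approach, applying the orbifold BMY inequality directly to the log pair $(\mathbb{P}^2_{\mathbb{C}}, (1-\tfrac1m)D)$ with $D = \sum_i \ell_i$ and reading the inequality off the logarithmic orbifold Euler number; this is cleaner to organise but rests on the same BMY-type theorem and merely repackages the same difficulty.
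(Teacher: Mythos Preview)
The paper does not give its own proof of this theorem; it is quoted from Hirzebruch~\cite{2} as a known incidence inequality, with only the remark that Hirzebruch's inequalities ``were derived from Bogomolov--Miyaoka--Yau inequality, a deep result in algebraic geometry.'' Your outline---dualising to a line arrangement, building the degree-$m^{n-1}$ abelian cover branched along the lines, resolving via blow-ups at multiple points, computing $c_1^2$ and $c_2$ of the resulting smooth surface, and feeding these into BMY---is precisely Hirzebruch's original argument, so your proposal matches what the paper cites rather than diverging from it. Your observation that Langer's orbifold/log-pair formulation repackages the same BMY input is also in line with the paper, which notes that Langer proved the related inequalities by working with log canonical pairs $(\mathbb{P}^2_{\mathbb{C}},\alpha D)$.

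One small caution on the bookkeeping: the intermediate ``sharper bound'' you write, $t_2 + \tfrac34 t_3 \ge n + \sum_{k\ge5}(k-4)t_k$, is not exactly what either of Hirzebruch's computations produces. The 1983 computation (cited here as~\cite{2}) yields $t_2 + t_3 \ge n + \sum_{k\ge5}(k-4)t_k$ directly, while the 1986 refinement~\cite{3} yields $t_2 + \tfrac34 t_3 \ge n + \sum_{k\ge5}(2k-9)t_k$ under the stronger hypothesis $t_{n-2}=0$; your stated intermediate inequality mixes the left side of the latter with the right side of the former. This does not affect the correctness of your overall plan, since either genuine Hirzebruch inequality implies the target, but when you carry out the Chern-number calculation you should expect one of those two forms rather than the hybrid.
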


\begin{theorem}[Hirzebruch's Inequality \cite{3}]
Let $S$ be a set of $n$ points in $\mathbb{P}^{2}_{\mathbb{C}}$, with at most $n-3$ points collinear. Then
\begin{equation}\label{eqn:hir}
\begin{split}
t_2 + \frac{3}{4}t_3 \ge n +  \sum_{k \ge 5} (2k - 9) t_{k}.    
\end{split}
\end{equation}
\end{theorem}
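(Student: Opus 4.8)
The inequality \eqref{eqn:hir} is a genuinely complex-geometric phenomenon: over $\R$ the bound of Melchior follows from Euler's formula, but the sharper complex estimates of Hirzebruch type have no known elementary combinatorial proof, so the plan is to follow Hirzebruch's algebraic-geometric route through ramified abelian covers and the (logarithmic) Miyaoka--Yau, i.e. Bogomolov--Miyaoka--Yau, inequality. First I would pass to the dual picture. By projective duality in $\mathbb{P}^{2}_{\mathbb{C}}$, the set $S$ of $n$ points together with the numbers $t_k$ (lines through exactly $k$ points) corresponds to an arrangement $\mathcal{A}$ of $n$ lines, where $t_k$ now counts the points at which exactly $k$ of the lines meet. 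The hypothesis that at most $n-3$ points of $S$ are collinear says that $t_k = 0$ for $k \ge n-2$, which dually means that no point lies on $n-2$ or more of the lines of $\mathcal{A}$; thus $\mathcal{A}$ is neither a pencil nor a near-pencil, and this is exactly the non-degeneracy required below.

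Next, for an integer $m \ge 2$ I would construct the Kummer cover $\rho \colon X_m \to \mathbb{P}^{2}_{\mathbb{C}}$ of exponent $m$ totally branched along the $n$ lines of $\mathcal{A}$ (adjoining $m$-th roots of ratios of the defining linear forms). Above each $k$-fold point of $\mathcal{A}$ the surface $X_m$ acquires a cyclic quotient singularity, which I would resolve to obtain a smooth projective surface $Y_m$. The computational heart is to express the Chern numbers $c_1^2(Y_m)$ and $c_2(Y_m)$ as explicit polynomials in $m$ whose coefficients involve $n$ and the $t_k$: the Euler number $c_2(Y_m)$ is obtained from multiplicativity of the topological Euler characteristic away from the branch locus, corrected by the local contributions at each $k$-fold point and along the exceptional divisors, while $c_1^2(Y_m) = K_{Y_m}^2$ is computed by pulling back $K_{\mathbb{P}^{2}_{\mathbb{C}}}$, adding the ramification divisor, and correcting for the resolution.

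The decisive input is then the Miyaoka--Yau inequality $c_1^2(Y_m) \le 3\,c_2(Y_m)$, valid because $Y_m$ is of general type; equivalently one applies the orbifold/logarithmic BMY inequality to the pair $\bigl(\mathbb{P}^{2}_{\mathbb{C}},\, \sum_i (1 - \tfrac1m) L_i\bigr)$. It is precisely here that the non-near-pencil hypothesis (at most $n-3$ collinear) enters, guaranteeing that $K_{Y_m}$ is nef and big so that BMY holds. Substituting the Chern-number polynomials into $c_1^2 \le 3\,c_2$, dividing through by the degree of the cover, and letting $m \to \infty$ collapses the estimate to a single linear relation among $n$ and the $t_k$; collecting terms and discarding the manifestly nonnegative ones yields $t_2 + \tfrac34 t_3 \ge n + \sum_{k\ge 5}(2k-9)t_k$.

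The main obstacle I anticipate is twofold. First, the resolution bookkeeping must be carried out exactly: each $k$-fold point contributes a precise defect to both $c_1^2$ and $c_2$ through the resolution of its cyclic quotient singularity, and the sharp coefficients $\tfrac34$ and $2k-9$ emerge only after these local contributions are computed and optimized correctly — a cruder accounting gives only the weaker inequality $t_2 + t_3 \ge n + \sum_{k\ge 5}(k-4)t_k$ of \eqref{eqn:hir1}. Second, one must rigorously certify that $Y_m$ is of general type, i.e. rule out the Kodaira dimension dropping in the borderline configurations; verifying nefness and bigness of $K_{Y_m}$ is the genuinely delicate step, and it is exactly what the strengthened collinearity hypothesis is there to secure.
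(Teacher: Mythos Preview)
The paper does not supply its own proof of this theorem; it is quoted as a known result from Hirzebruch \cite{3}, with only the remark that ``Hirzebruch's inequalities were derived from Bogomolov--Miyaoka--Yau inequality, a deep result in algebraic geometry.'' Your outline is precisely that derivation---the abelian (Kummer) cover of $\mathbb{P}^{2}_{\mathbb{C}}$ branched along the dual line arrangement, computation of $c_1^2$ and $c_2$ of the desingularized cover, application of $c_1^2\le 3c_2$, and passage to the limit in the branching exponent---so it is fully consistent with what the paper indicates and with Hirzebruch's original argument. There is nothing further to compare.
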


Hirzebruch's inequalities do not follow from Euler's formula as one would suspect. Instead, Hirzebruch's inequalities were derived from Bogomolov–Miyaoka–Yau inequality, a deep result in algebraic geometry and it is true for arrangement of points in the complex plane. 

Bojanowski \cite{1} and Pokora \cite{5} used Langer's work \cite{6} to prove the following theorem.

\begin{theorem}[Bojanowski-Pokora Inequality]
Let $S$ be a set of $n$ points in $\mathbb{P}^{2}_{\mathbb{C}}$, with at most $\frac{2}{3}n$ points collinear. Then
\begin{equation}\label{eqn:bopo}
\begin{split}
t_2 + \frac{3}{4}t_3 \ge n +  \sum_{k \ge 5} (\frac{1}{4}k^2 - k) t_{k}.    
\end{split}
\end{equation}
\end{theorem}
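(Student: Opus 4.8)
The plan is to show that the claimed inequality is, after an elementary algebraic manipulation, equivalent to Langer's inequality combined with the standard count of point pairs. First I would record the double-counting identity $\sum_{k \ge 2}\binom{k}{2}t_k = \binom{n}{2}$, which holds because each of the $\binom{n}{2}$ pairs of points of $S$ lies on exactly one determined line, and a line through exactly $k$ points accounts for $\binom{k}{2}$ such pairs. Clearing denominators, this reads $\sum_{k\ge 2}k^2 t_k = n(n-1) + \sum_{k\ge 2}k t_k$.

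Next I would rewrite the target. The key observation is that the function $k \mapsto k - \tfrac14 k^2$ takes the values $1$, $\tfrac34$, $0$ at $k = 2, 3, 4$, and equals $-(\tfrac14 k^2 - k)$ for every $k$. Hence, moving the right-hand sum to the left and absorbing the $t_2$, $t_3$ terms, I would verify that $t_2 + \tfrac34 t_3 - \sum_{k\ge 5}(\tfrac14 k^2 - k)t_k = \sum_{k\ge 2}(k - \tfrac14 k^2)t_k$, where the $k=4$ term contributes $0$ so extending the sum down to $k=2$ is harmless. Thus the assertion is equivalent to $\sum_{k\ge 2}(k - \tfrac14 k^2)t_k \ge n$, that is, $\sum_{k\ge 2}k t_k - \tfrac14\sum_{k\ge 2}k^2 t_k \ge n$.

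Substituting the pair-counting identity for $\sum_{k \ge 2} k^2 t_k$ and abbreviating $L = \sum_{k\ge 2}k t_k$, the inequality collapses to $\tfrac34 L - \tfrac14 n(n-1) \ge n$, equivalently $L \ge \tfrac{n(n+3)}{3}$. This is precisely Langer's inequality, whose hypothesis (at most $\tfrac23 n$ points of $S$ collinear) coincides with the hypothesis we are given. Reversing the chain of equivalences then delivers the stated bound.

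Since every step after the initial observation is a mechanical rearrangement, there is essentially no obstacle once one notices the identity $a_k = k - \tfrac14 k^2$ unifying the coefficients on both sides; the only point demanding care is the coefficient bookkeeping at $k = 2, 3, 4$, so that the passage between the two forms is exact. All the genuine mathematical content is carried by Langer's inequality, and this argument simply re-expresses it through the double count of point pairs.
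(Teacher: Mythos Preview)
Your derivation is correct. The paper does not itself prove this inequality; it is quoted as a known result of Bojanowski and Pokora, with the remark that they ``used Langer's work'' to obtain it, and the paper only records the equivalent reformulation $\sum_{k\ge 2}(4k-k^2)t_k \ge 4n$ (which is exactly four times your intermediate inequality $\sum_{k\ge 2}(k-\tfrac14 k^2)t_k \ge n$). Your argument makes that attribution explicit: combining Langer's inequality with the elementary pair-count $\sum_{k\ge 2}\binom{k}{2}t_k=\binom{n}{2}$ yields the Bojanowski--Pokora bound, and the coefficient check at $k=2,3,4$ is accurate. So there is no discrepancy---you have supplied the short derivation that the paper merely alludes to.
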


(\ref{eqn:bopo})  is equivalent to
\begin{equation}\label{eqn:eqbopo}
\begin{split}
 \sum_{k \ge 2} (4k - k^2) t_{k} \ge 4n.    
\end{split}
\end{equation}

\begin{remark}
 One should note that these inequalities (except (\ref{eqn:mel})) were originally proved for an arrangement of lines in the complex projective plane such that $t_k$ is the number
of intersection points where exactly $k$ lines of the arrangement are incident. 
\end{remark}

\begin{remark}
Purdy and Smith \cite{8} proved Theorems~\ref{thm: PS1}, \ref{thm: PS2}  and \ref{thm: PS4} using Melchior's inequality~(\ref{eqn:mel}) and proved Theorem~\ref{thm: PS3} using Hirzebruch's inequality~(\ref{eqn:hir}).
\end{remark}

\section{Lower Bounds for Lines in $\mathbb{C}^2$}
The identities below can be found in \cite{7, 8} and will be used in this section. Let $t_{i, j}$ be the number of lines determined by $P$ with exactly $i$ green points and $j$ red points, where we always assume $i + j \ge 2$. Assume that the number of green points is $n$ and the number of red points is $n$. Then the number of bichromatics point pairs is
\begin{equation*}
\sum\limits_{\substack{i, j \ge0 \\ i + j \ge 2}} ijt_{i, j} = n^2
\end{equation*}
and the number of monochromatic point pairs is
\begin{equation*}
\mathlarger{\mathlarger{\sum}}\limits_{\substack{i, j \ge0 \\ i + j \ge 2}} \left[ \left( \begin{array}{c} i \\ 2 \end{array} \right) + \left( \begin{array}{c} j \\ 2 \end{array} \right)  \right] t_{i, j} = 2\left( \begin{array}{c} n \\ 2 \end{array} \right) = n^2 - n.
\end{equation*}

In general, if we assume that the number of green points is $n$ and the number of red points is $n - k$, then the above identities become 
\begin{equation}\label{eqn:id1}
\sum\limits_{\substack{i, j \ge0 \\ i + j \ge 2}} ijt_{i, j} = n(n - k) = n^2 - nk
\end{equation}
 and 
 \begin{equation}\label{eqn:id2} \mathlarger{\mathlarger{\sum}}\limits_{\substack{i, j \ge0 \\ i + j \ge 2}} \left[ \left( \begin{array}{c} i \\ 2 \end{array} \right) + \left( \begin{array}{c} j \\ 2 \end{array} \right)  \right] t_{i, j} = \left( \begin{array}{c} n \\ 2 \end{array} \right) + \left( \begin{array}{c} n - k \\ 2 \end{array} \right) = n^2 - n -nk + \frac{k^2 + k}{2}.
\end{equation}

We subtract (\ref{eqn:id1}) from (\ref{eqn:id2}) and then split the summation to get the following identity
\begin{equation}\label{eqn:id3}
\sum\limits_{\substack{i, j \ge0 \\ i + j \ge 2}} (i + j)t_{i, j} = \sum\limits_{\substack{i, j \ge0 \\ i + j \ge 2}} (i - j)^2 t_{i, j} + 2n - (k^2 + k).
\end{equation}

\subsection{A Lower Bound for $1$-Equichromatic lines through at most six points}\hspace*{\fill}\\ \label{sec: fivepts}

As stated before, we are not able to find the claimed result of Purdy and Smith \cite{8} on $1$-equichromatic lines through at most six points in $\mathbb{C}^2$. Below we will prove the result. 

\begin{theorem}\label{myown3}
  Let $P$ be a set of $n$ green and $n - k$ red points in $\mathbb{C}^2$ such that at most $2n - k - 2$ points of $P$
are collinear. Then the number of $1$-equichromatic lines passing through at most six points is
at least $\frac{1}{4}(6n - k(k + 3))$.
\end{theorem}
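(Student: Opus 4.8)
The plan is to follow the architecture of Theorem~\ref{thm: PS3}, but to drive the argument with Hirzebruch's inequality~(\ref{eqn:hir1}) in place of~(\ref{eqn:hir}). The collinearity hypothesis is the decisive hint for this substitution: the total number of points of $P$ is $N = 2n-k$, and the assumption that at most $2n-k-2$ points are collinear is exactly the hypothesis ``at most $N-2$ collinear'' required by~(\ref{eqn:hir1}), whereas Theorem~\ref{thm: PS3} needed the stronger ``at most $N-3$ collinear'' demanded by~(\ref{eqn:hir}). So I would invoke~(\ref{eqn:hir1}) with $n$ replaced by $2n-k$, i.e. $t_2 + t_3 \ge (2n-k) + \sum_{s\ge 5}(s-4)t_s$, where I write $s$ for the number of points on a line to avoid clashing with the colour parameter $k$.

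The core of the proof is a single weighting of the lines that packages identity~(\ref{eqn:id3}) with twice~(\ref{eqn:hir1}). For a line with $i$ green and $j$ red points, set $s=i+j$ and $w(i,j) = (i+j)-(i-j)^2$, so that~(\ref{eqn:id3}) reads $\sum w(i,j)\,t_{i,j} = 2n-k^2-k$. Let $h(i,j)$ record the coefficient that~(\ref{eqn:hir1}) attaches to each line, namely $h=1$ if $s\in\{2,3\}$, $h=0$ if $s=4$, and $h=-(s-4)$ if $s\ge 5$, so that $\sum h(i,j)\,t_{i,j} \ge 2n-k$. I would then introduce $\phi(i,j) = w(i,j)+2h(i,j)$ and establish the pointwise estimate that $\phi(i,j)\le 4$ whenever the line is $1$-equichromatic (that is, $|i-j|\le 1$) with $s\le 6$, and $\phi(i,j)\le 0$ for every other line. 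Granting this, the theorem is immediate: letting $E$ denote the number of $1$-equichromatic lines through at most six points and summing against $t_{i,j}\ge 0$ gives $\sum\phi(i,j)\,t_{i,j}\le 4E$, while on the other hand $\sum\phi\,t_{i,j} = \sum w\,t_{i,j} + 2\sum h\,t_{i,j} \ge (2n-k^2-k)+2(2n-k) = 6n-k^2-3k$, so that $4E \ge 6n-k(k+3)$.

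The heart of the matter, and the step I expect to be the real obstacle, is verifying the pointwise bound on $\phi$. For $s\in\{2,3,4\}$ one checks the few colour types by hand; here $\phi(i,j)=4-(i-j)^2$ for $s\in\{2,4\}$ and $\phi(i,j)=5-(i-j)^2$ for $s=3$, and the equichromatic types $(1,1),(2,1),(2,2)$ attain the maximum $\phi=4$ while the rest are nonpositive. For $s\ge 5$ the coefficient collapses to the single clean expression $\phi(i,j) = 8 - s - (i-j)^2$, from which the equichromatic cases $(3,2)$ and $(3,3)$ give $\phi=2\le 4$, and every line with $s\ge 7$ satisfies $\phi\le 0$. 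The delicate boundary is the equichromatic type $(4,3)$ at $s=7$, where $\phi = 8-7-1 = 0$ exactly.

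It is precisely this vanishing at $s=7$ that both forces the use of~(\ref{eqn:hir1}) rather than~(\ref{eqn:hir}) and is what upgrades the count from ``at most five points'' to ``at most six points'': the slope $-(s-4)$ supplied by~(\ref{eqn:hir1}) is calibrated so that seven-point equichromatic lines contribute zero and thus may be harmlessly discarded from $E$. The remaining things to confirm are routine: that the multiplier $2$ attached to~(\ref{eqn:hir1}) is nonnegative, so the inequality may legitimately be added to the exact identity, and that~(\ref{eqn:hir1}) applies under the stated hypothesis because $N=2n-k$ and at most $N-2$ points are collinear.
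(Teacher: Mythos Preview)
Your argument is correct and is essentially identical to the paper's: both combine Hirzebruch's inequality~(\ref{eqn:hir1}) with the identity obtained from (\ref{eqn:id2})$-$(\ref{eqn:id1}) (equivalently~(\ref{eqn:id3})), and your weight $\phi(i,j)=w(i,j)+2h(i,j)$ is exactly $-2$ times the paper's coefficient $\alpha_{i,j}$, so the two computations coincide after a sign flip and rescaling. Your closed form $\phi(i,j)=8-s-(i-j)^2$ for $s\ge 5$ makes the verification of the coefficient signs a bit more transparent than the paper's case check, but the underlying idea is the same.
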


\begin{proof}
First,
we express
(\ref{eqn:hir1}) as
\begin{equation}\label{eqn:newbopo}
 -(t_{0, 2} + t_{2, 0}) - t_{1, 1} - (t_{0, 3} + t_{3, 0}) - (t_{1, 2} + t_{2, 1}) +  \sum\limits_{\substack{i, j \ge0 \\ i + j \ge 5}} \left((i + j) - 4 \right) t_{i, j} \le - (2n -k).    
\end{equation}
We subtract (\ref{eqn:id1}) from (\ref{eqn:id2}) and unwind the first few terms of the summation to get
\begin{dmath}\label{eqn:newbopo1}
 (t_{0, 2} + t_{2, 0}) - t_{1, 1} + 3(t_{0, 3} + t_{3, 0}) - (t_{1, 2} + t_{2, 1}) + 6(t_{0, 4} + t_{4, 0}) - 2t_{2, 2} + \mathlarger{\mathlarger{\sum}}\limits_{\substack{i, j \ge0 \\ i + j \ge 5}} \left[ \left( \begin{array}{c} i \\ 2 \end{array} \right) + \left( \begin{array}{c} j \\ 2 \end{array} \right)  -ij \right] t_{i, j} = -n + \frac{k^2 + k}{2}. 
\end{dmath}

Adding (\ref{eqn:newbopo}) and (\ref{eqn:newbopo1}) produces
\begin{dmath}\label{eqn:newbopo2}
 - 2t_{1, 1} + 2(t_{0, 3} + t_{3, 0}) - 2(t_{1, 2} + t_{2, 1}) + 6(t_{0, 4} + t_{4, 0}) - 2t_{2, 2} + \mathlarger{\mathlarger{\sum}}\limits_{\substack{i, j \ge0 \\ i + j \ge 5}} \left[ \left( \begin{array}{c} i \\ 2 \end{array} \right) + \left( \begin{array}{c} j \\ 2 \end{array} \right)  - ij \right] t_{i, j} +  \sum\limits_{\substack{i, j \ge0 \\ i + j \ge 5}} \left((i + j)- 4 \right) t_{i, j} \le -(2n - k) - n + \frac{k^2 + k}{2}. 
\end{dmath}

Let $\alpha_{i, j}$ be the coefficient corresponding to  $t_{i, j}$ produced by the left-hand side of the inequality above. One can check that the only negative coefficients are $\alpha_{1, 1} =  \alpha_{1, 2} = \alpha_{2, 1} = \alpha_{2, 2} = -2$, and $\alpha_{2, 3} = \alpha_{3, 2} = \alpha_{3, 3} = -1$. Thus
\begin{equation*}\label{eqn:meleopo1}
\begin{split}
- 2(t_{1, 1} + t_{1, 2} + t_{2, 1} + t_{2, 2} + t_{2, 3} + t_{3, 2} + t_{3, 3}) \le \frac{-6n + k(k + 3)}{2}.    
\end{split}
\end{equation*}
The result follows immediately.
\end{proof}

\subsection{A Lower Bound for $2$-Equichromatic lines through at most four points}\hspace*{\fill}\\\label{sec: fourpts}

We now consider $2$-equichromatic lines through at most four points. To begin with, we write (\ref{eqn:eqbopo}) within our context and add that to (\ref{eqn:id3}) to get
\begin{equation}\label{eqn:our2}
\begin{split}
 \sum\limits_{\substack{i, j \ge0 \\ i + j \ge 2}} \left( 5(i + j) - (i - j)^2 - (i + j)^2\right) t_{i, j} \ge  10n - k(k +5).    
\end{split}
\end{equation}

Let $\alpha_{i, j}$ be the coefficient corresponding to  $t_{i, j}$ in  (\ref{eqn:our2}). One can check that the only positive coefficients are $\alpha_{0, 2} = \alpha_{2, 0} = 2, \alpha_{1, 1} = 6, \alpha_{1, 2} = \alpha_{2, 1} = 5$, and $\alpha_{2, 2} = 4$ and therefore,
\begin{equation*}\label{eqn:meleqbopo1}
\begin{split}
 6(t_{0, 2} + t_{2, 0} + t_{1, 1} + t_{1, 2} + t_{2, 1} + t_{2, 2}) \ge 10n - k(k + 5).    
\end{split}
\end{equation*}

This gives us the following:

\begin{theorem}\label{thm:myown2}
  Let $P$ be a set of $n$ green and $n - k$ red points in $\mathbb{C}^2$ such that at most $\frac{2}{3}(2n - k)$ points of $P$
are collinear. Then the number of $2$-equichromatic lines passing through at most four points is
at least $\frac{1}{6}(10n - k(k + 5))$.  
\end{theorem}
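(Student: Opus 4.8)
The plan is to read off the conclusion directly from the inequality established immediately above the statement, in close analogy with the proof of Theorem~\ref{myown3}. The computation leading to (\ref{eqn:our2}) has already combined the Bojanowski--Pokora inequality (\ref{eqn:eqbopo}) with the chromatic identity (\ref{eqn:id3}); I would first note that invoking (\ref{eqn:eqbopo}) is legitimate precisely because $P$ has $n + (n-k) = 2n - k$ points in total, so the hypothesis that at most $\frac{2}{3}(2n-k)$ of them are collinear is exactly the collinearity threshold required by that inequality.

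Granting (\ref{eqn:our2}), the first step is a sign analysis of the coefficient $\alpha_{i,j} = 5(i+j) - (i-j)^2 - (i+j)^2$. Setting $s = i + j$ and $d = i - j$ (so that $s$ and $d$ share the same parity), one has $\alpha_{i,j} = \frac{25}{4} - (s - \frac{5}{2})^2 - d^2$, and $\alpha_{i,j} > 0$ forces $(s - \frac{5}{2})^2 + d^2 < \frac{25}{4}$. Checking $s = 2, 3, 4$ and observing that the bound fails for $s \ge 5$ shows that the positive coefficients occur exactly at the pairs $(2,0),(0,2),(1,1),(2,1),(1,2),(2,2)$, with maximum value $\alpha_{1,1} = 6$; the two boundary pairs $(3,1),(1,3)$ contribute the harmless coefficient $0$. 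Each of the six pairs satisfies $i + j \le 4$ and $|i - j| \le 2$, so each corresponds to a $2$-equichromatic line through at most four points.

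The second step is the deduction itself. Since every $t_{i,j} \ge 0$, discarding the terms with $\alpha_{i,j} \le 0$ can only increase the left-hand side of (\ref{eqn:our2}); replacing each surviving coefficient by the maximum value $6$ then yields
\[
6\left(t_{0,2} + t_{2,0} + t_{1,1} + t_{1,2} + t_{2,1} + t_{2,2}\right) \ge 10n - k(k+5).
\]
The left-hand sum counts a subset of the $2$-equichromatic lines through at most four points, so their total number is at least $\frac{1}{6}\bigl(10n - k(k+5)\bigr)$, as claimed.

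I expect the main obstacle to be the sign analysis of the first step rather than any algebra: one must be certain that \emph{no} line type outside the target class carries a positive coefficient, since a stray positive term would invalidate the passage from the weighted sum to the honest count of $2$-equichromatic lines. The reduction to the variables $s$ and $d$ makes this transparent and confirms that the positive support of $\alpha_{i,j}$ is contained in the set of $2$-equichromatic lines through at most four points.
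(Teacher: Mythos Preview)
Your proposal is correct and follows essentially the same route as the paper: both start from (\ref{eqn:our2}), identify the positive coefficients $\alpha_{i,j}$ (the paper simply lists them, you verify them via the substitution $s=i+j$, $d=i-j$), bound each by the maximum $6$, and conclude. Your remark that the six positive terms form only a subset of the $2$-equichromatic lines through at most four points (since $t_{3,1}$ and $t_{1,3}$ have coefficient $0$) is a nice clarification the paper leaves implicit.
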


\section*{Acknowledgements}
The author is grateful to Michael Payne for simulating discussions that brought up \cite{8}. 
\\\\
Dickson Y. B. Annor\\
Department of Mathematical and Physical Sciences\\
La Trobe University, Bendigo, Victoria 3552 Australia.\\
d.annor@latrobe.edu.au\


\begin{thebibliography}{Arch}

\bibitem[1]{1}
R. Bojanowski, Zastosowania uog{\'o}lnionej nier{\'o}wnosci Bogomolova-Miyaoka-Yau, Master Thesis (in Polish), {\url{http://www.mimuw.edu.pl/~alan/postscript/bojanowski.pdf }}(2003).

\bibitem[2]{2}
F. Hirzebruch, Arrangement of lines and algebraic surfaces, \emph{Arithmetic and geometry, Vol. II, Progr. Math, vol.} \textbf{36}(1983), 113--140.

\bibitem[3]{3}
F. Hirzebruch, Singularities of algebraic surfaces and characteristic numbers, \emph{Contemp. Math.} \textbf{58}(1986), 141--155.

\bibitem[4]{4}
E. Melchoir, \"{U}ber {V}ielseite der projektiven {E}bene, \emph{Dtsch. Math.} \textbf{5}(1941), 4--306.

\bibitem[5]{5}
P. Pokora, Hirzebruch-type inequalities viewed as tools im combinatorics, \emph{Electron. J. Combin.} \textbf{28(1)}(2021), \#P1.9.

\bibitem[6]{6}
A. Langer, Logarithmic orbifold Euler numbers of surfaces with applications, \emph{Proc. London Math. Soc.} (3) \textbf{86(2)} (2003) 358--396.

\bibitem[7]{7}
J. Pach and R. Pinchasi, Bichromatic lines with few points, \emph{J. comp. Theory Ser. A} \textbf{90} (2000), no.2, 326--335.

\bibitem[8]{8}
G. Purdy and J. Smith, Bichromatic and equichromatic
lines in $\mathbb{C}^2$ and $\mathbb{R}^2$, \emph{Discrete Comput. Geom.} \textbf{43(3)} (2010) 563--576.
\end{thebibliography}
\end{document}